\theoremstyle{plain}
\newtheorem{theorem}{Theorem}[section]
\theoremstyle{definition}
\newtheorem{remark}[theorem]{Remark}
\def\F{{\mathbb F}}
\def\Fq{{\mathbb F}_q}
\def\Fqk{{\mathbb F}_{q^k}}
\newcommand{\Tr}{{\rm Tr}}
\def\imod#1{\allowbreak\mkern10mu({\operator@font mod}\,\,#1)}
\title{A New Proof of Fitzgerald's Characterization of Primitive Polynomials}
\author{Samrith Ram}
\address{Institut de Math\'{e}matiques de Luminy \newline \indent 
Luminy Case 907 \newline \indent
13288 Marseille Cedex 9 \newline \indent
France \newline \indent}
\email{samrith@gmail.com}
\keywords{Irreducible polynomial, Primitive polynomial, Trace}
\subjclass[2010]{11T06,11T71,12E05}
\begin{document}
\maketitle
\begin{abstract}
We give a new proof of Fitzgerald's criterion for primitive polynomials over a finite field. Existing proofs essentially use the theory of linear recurrences over finite fields. Here, we give a much shorter and self-contained proof which does not use the theory of linear recurrences. 
\end{abstract}
\section{Introduction}
Fitzgerald \cite{Fitz} gave a criterion for distinguishing primitive polynomials among irreducible ones by counting the number of nonzero coefficients in a certain quotient of polynomials. Subsequently, Laohakosol and Pintoptang \cite{LaPi} modified and extended the result of Fitzgerald using similar techniques and appealing to the theory of linear recurrences. Here, we prove Fitzgerald's original result by a more direct approach using elementary properties of the trace map.
\section{Fitzgerald's Theorem}

\begin{theorem}[Fitzgerald]
\label{fitzgerald}
  Let $p(x)\in \Fq[x]$ be a monic irreducible polynomial of degree $k$ with $p(1) \neq 0$. Let $m=q^k-1$ and define $g(x)=(x^m-1)/(x-1)p(x)$. Then $p(x)$ is primitive iff $g(x)$ is a polynomial with exactly $(q-1)q^{k-1}-1$ nonzero terms.
\end{theorem}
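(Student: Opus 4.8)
The plan is to expand $g$ as a power series, read off its coefficients as partial sums of the coefficients of $1/p(x)$, and then translate everything into the language of the trace map $\Tr=\Tr_{\Fqk/\Fq}$. First I would write $1/p(x)=\sum_{n\ge 0}b_n x^n$ (legitimate since $p(0)\ne 0$), so that $1/((x-1)p(x))=-\sum_{n\ge 0}B_n x^n$ with $B_n=\sum_{j=0}^n b_j$. Multiplying by $x^m-1$ gives $g(x)=\sum_{n=0}^{m-k-1}B_n x^n$; moreover the exact identity $\deg g=m-k-1$ forces $B_{m-k}=\cdots=B_{m-1}=0$, while the relation $B_n=B_{n-m}$ shows that $(B_n)$ is $m$-periodic. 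Hence the number of nonzero terms of $g$ equals $\#\{0\le n\le m-1:B_n\ne 0\}$, a count over one full period.

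The second step is to compute $b_n$ and $B_n$ explicitly. Letting $\alpha$ be a root of $p$ and using the partial-fraction expansion $1/p(x)=\sum_{i=0}^{k-1}1/(p'(\beta_i)(x-\beta_i))$ over the conjugates $\beta_i=\alpha^{q^i}$, a Frobenius argument gives $b_n=-\Tr(\alpha^{-n-1}/p'(\alpha))$. Summing a geometric series (using $\alpha\ne 1$, which holds because $p(1)\ne 0$) yields $B_n=\Tr(\gamma\alpha^{-n-1})-T$, where $\gamma=1/((\alpha-1)p'(\alpha))$ and $T=\Tr(\gamma)$. Here $\gamma\in\Fqk^*$, since $\alpha\ne 1$ and $p'(\alpha)\ne 0$ by separability of $p$. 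The crucial computation is the value of $T$: evaluating the partial-fraction expansion of $1/p$ at $x=1$ gives $1/p(1)=\sum_i 1/(p'(\beta_i)(1-\beta_i))=-\Tr(\gamma)$, so $T=-1/p(1)\ne 0$. This nonvanishing of $T$ is what separates the two cases, and I regard it as the heart of the argument.

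With these in hand the counting is almost mechanical. Writing $e=\ord(\alpha)$, the value $\alpha^{-n-1}$ runs over the cyclic group $\langle\alpha\rangle$ with period $e$, so $\gamma\alpha^{-n-1}$ sweeps the coset $\gamma\langle\alpha\rangle$ exactly $m/e$ times as $n$ ranges over $\{0,\dots,m-1\}$. Thus $B_n=0$ occurs exactly $\tfrac{m}{e}\,r$ times, where $r=\#\{z\in\gamma\langle\alpha\rangle:\Tr(z)=T\}$, and the number of nonzero terms of $g$ is $m-\tfrac{m}{e}r$. If $p$ is primitive then $e=m$ and $\gamma\langle\alpha\rangle=\Fqk^*$; since each trace fiber in $\Fqk$ has exactly $q^{k-1}$ elements and $T\ne 0$ excludes $z=0$, we get $r=q^{k-1}$ and the count is $m-q^{k-1}=(q-1)q^{k-1}-1$, as required.

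For the converse I would use a short integrality argument rather than a second explicit computation. The count equals $(q-1)q^{k-1}-1=m-q^{k-1}$ precisely when $\tfrac{m}{e}r=q^{k-1}$, i.e. $r=e\,q^{k-1}/(q^k-1)$. Since $\gcd(q^{k-1},q^k-1)=1$ and $e\mid q^k-1$, the right-hand side is an integer only when $e=q^k-1=m$, that is, only when $\alpha$ is primitive. As $r$ must be an integer, for every non-primitive $p$ the count differs from $(q-1)q^{k-1}-1$, completing the equivalence. The main obstacle I anticipate is organizing the trace identities cleanly, particularly pinning down $T=-1/p(1)$; once that is done, both the primitive case and the integrality-based converse fall out quickly.
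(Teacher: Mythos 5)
Your proposal is correct and follows essentially the same route as the paper: partial fractions turn the coefficients of $1/p$ into traces, a geometric series expresses each coefficient of $g$ as a difference of traces, the key constant has trace $\pm 1/p(1)\neq 0$, and the converse follows from the coprimality of $(q^k-1)/e$ with $q^{k-1}$. The only cosmetic differences are that you work with $\alpha^{-n-1}$ in place of the roots of the reciprocal polynomial, and you obtain $T=-1/p(1)$ by evaluating the partial-fraction identity at $x=1$ rather than via Lagrange interpolation (which is the same identity in disguise).
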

\begin{proof}
If $p(0)=0$ then $p(x)$ cannot be primitive. So suppose $p(0)\neq 0$. Then $g(x)$ is a polynomial of degree at most $m-1$. Let $q(x)$ be the monic reciprocal of $p(x)$ and let $q(x)=(x-\alpha_1)\cdots (x-\alpha_k)$ be the factorization of $q(x)$ in $\Fqk[x]$. Then
\begin{equation*}
p(x)=a\prod_{i=1}^{k}(1-\alpha_i x)   
\end{equation*}
for some $a\in \Fq^*$. We then have the partial fraction decomposition
\begin{equation*}
  \frac{1}{p(x)}=\frac{1}{a}\sum_{i=1}^{k}\frac{a_i}{1-\alpha_i x},
\end{equation*}
where $a_i=\alpha_i^{k-1}/q'(\alpha_i)$ for $1 \leq i \leq k$. Expanding each term of the partial fraction formally as a power series and collecting terms, we obtain
\begin{equation*}
  \frac{1}{p(x)}=\frac{1}{a}(s_{k-1}+s_kx+s_{k+1}x^2+\cdots),
\end{equation*}
where
\begin{equation*}
  s_r=\sum_{i=1}^{k}\frac{\alpha_i^r}{q'(\alpha_i)}=\Tr\left(\frac{\alpha^r}{q'(\alpha)}\right)
\end{equation*}
for each integer $r$ and $\alpha=\alpha_1$. Here, $\Tr : \Fqk \to \Fq$ is the trace map. Now, we have 
\begin{align*}
  g(x)&=\frac{x^m-1}{(x-1)p(x)}=\frac{1}{a}(1+x+\cdots x^{m-1})(s_{k-1}+s_kx+s_{k+1}x^2+\cdots).
\end{align*}
Since we already know that $g(x)$ is a polynomial of degree less than $m$ we compute the coefficient of $x^t$ in the above product for $0\leq t \leq m-1$. This coefficient equals
\begin{align*}
  \sum_{i=k-1}^{k+t-1}s_i&=\Tr\left(\sum_{i=k-1}^{k+t-1}\frac{\alpha^r}{q'(\alpha)}\right)\\
     &=\Tr\left(\frac{\alpha^{k-1}(1-\alpha^{t+1})}{q'(\alpha)(1-\alpha)}\right)\\
     &=\Tr(\beta)-\Tr(\beta \alpha^{t+1})
\end{align*}
where $\beta=\alpha^{k-1}/q'(\alpha)(1-\alpha)$. Thus the number of nonzero coefficients in $g(x)$ is equal to the cardinality of 
\begin{equation*}
\left\{t: \Tr(\beta)-\Tr(\beta \alpha^{t+1})\neq 0, 0 \leq t \leq m-1\right\}.
\end{equation*}
We claim that $\Tr(\beta)\neq 0$. To see this, note that 
\begin{equation*}
\label{traceofb}
  \Tr(\beta)=\sum_{i=1}^{k}\frac{\alpha_i^{k-1}}{(1-\alpha_i)q'(\alpha_i)}
\end{equation*}
To compute $\Tr(\beta)$, let $y$ be an indeterminate and consider the Lagrange interpolation polynomial for $y^{k-1}$ at $\alpha_1,\ldots,\alpha_k$:
\begin{align*}
  y^{k-1}&=\sum_{i=1}^{k}\frac{\alpha_i^{k-1}}{q'(\alpha_i)}\prod_{j\neq i}(y-\alpha_j)\\
   &=\sum_{i=1}^{k}\frac{\alpha_i^{k-1}}{q'(\alpha_i)}\frac{q(y)}{(y-\alpha_i)}
\end{align*}
Setting $y=1$ we find that $\Tr(\beta)=1/q(1)\neq 0$ (since $p(1)\neq 0$), proving the claim.

  Suppose $p(x)$ is primitive. Then $q(x)$ is also primitive and consequently $\alpha$ has multiplicative order $q^k-1$ in $\Fqk^*$. Therefore, the set $\{\beta \alpha^{t+1}:0\leq t \leq m-1\}$ is precisely $\Fqk^*$. Since the map $\Tr: \Fqk\to \Fq$ is surjective and all its fibers have the same cardinality, there are precisely $q^{k-1}$ values of $t$ ($0\leq t \leq m-1$) for which $\Tr(\beta \alpha^{t+1})=\Tr(\beta)$ (since $\Tr(\beta)\neq 0$). Thus, the number of nonzero coefficients of $g(x)$ in this case is $m-q^{k-1}$.

For the converse, suppose $p(x)$ is not primitive. Then neither is $q(x)$ and hence, the multiplicative order (say $e$) of $\alpha$ is a proper divisor of $q^k-1$. In this case, the number (say $N$) of values of $t$ ($0\leq t \leq m-1$) for which $\Tr(\beta \alpha^{t+1})=\Tr(\beta)$ is an integer multiple of $(q^k-1)/e$. Since $(q^k-1)/e>1$ and $(q^k-1)/e$ is coprime to $q^{k-1}$, it follows that $N \neq q^{k-1}$. Thus, the number of nonzero coefficients of $g(x)$ cannot be $m-q^{k-1}$. This completes the proof.   
\end{proof}

\begin{remark}
The condition $p(1) \neq 0 $ is imposed to rule out the polynomial $p(x)=x-1$ which is primitive in $\F_2[x]$.
\end{remark}
\bibliographystyle{abbrv}
\bibliography{/home/samrith/Dropbox/math/bibliography/mybib}

\end{document}